\DeclareMathOperator{\Ima}{Im}
\DeclareMathOperator{\Ker}{Ker}
\newtheorem{dummy}{anything}[section]
\newtheorem{theorem}[dummy]{Theorem}
\newtheorem{lemma}[dummy]{Lemma}
\newtheorem{proposition}[dummy]{Proposition}
\theoremstyle{definition}
\newtheorem{definition}[dummy]{Definition}
\newtheorem{example}[dummy]{Example}
\newtheorem{remark}[dummy]{Remark}
\newcommand{\bF}{\mathbf F}
\newcommand{\bR}{\mathbb R}
\def\:{\mkern 1.2mu \colon}
\newcommand{\mmatrix}[4]{\left (\vcenter
{\xymatrix@C-2pc@R-2pc{#1&#2\\#3&#4} } \right )}
               \DeclareMathOperator{\im}{im}
\numberwithin{equation}{section} 
\begin{document}
\title[Homological Properties of Persistent Homology]
{Homological Properties of Persistent Homology}

\subjclass[2010]{Primary:57R70, 37E35; Secondary:57R05}
\keywords{Perfect discrete Morse function, persistent module, Mayer-Vietoris sequence, relative pair.}

\author{Han\.{I}fe Varl{\i}, Ya\u{G}mur Yilmaz, Mehmetc{\.I}k Pamuk}


\address{Department of Mathematics
\newline\indent
Middle East Technical University
\newline\indent
Ankara 06531, Turkey}  
\email{hisal{@}metu.edu.tr, guler.yagmur{@}metu.edu.tr, mpamuk{@}metu.edu.tr}


\date{\today}

\begin{abstract}
We investigate to what extent persistent homology benefits from the properties of the usual homology theory.
		
\end{abstract}

\maketitle

\section{INTRODUCTION}

When one analyzes data obtained from experiments, some geometric structures can be observed as the characteristic of the data but it is not always true that
every data has such a well-organized geometric structure.  Recent researches reveal that some methods of algebraic topology are effective in the characterization 
of complicated data.  This kind of research is called Topological Data Analysis(TDA) and a common mathematical tool in  TDA is to capture topological features is  
persistent homology which is an algebraic method to determine features of a topological space or a data set by using filtration which comes from a suitable function 
on it.

The persistence of general algebraic invariants of topological spaces is not yet well understood.  Moreover, it is not clear that whether the methods to determine 
classical algebraic invariants work for persistent homology. One such method is the Mayer-Vietoris sequence which allows one to study the homology groups of a given space 
in terms of relatively simpler homology groups of its subspaces.  In \cite{di2011mayer}, it is proved that the Mayer-Vietoris sequence for persistent homology is not exact
(actually exact of order $2$, that is $\im \subset \ker $ instead of equality).  In this paper, we first give concrete examples showing this non-exactness.  Then, we show 
that the Mayer-Vietoris sequence is exact if instead one works with the persistent modules.

While investigating the reasons causing non-exactness we work with discrete Morse functions and in particular perfect discrete Morse functions 
(most suitable for combinatorial purposes) as filtering functions.  We observe that one of the reasons for Mayer-Vietoris sequence being 
not exact is classes that born and die through out the filtration.  To minimize such classes we work with perfect discrete Morse functions.  
We show that even one works with such functions Mayer-Vietoris sequence is still not exact (see Example~\ref{example}).

According to \cite{VDM}, there are at least four natural persistent objects that can be derived from a filtered chain complex.  They are absolute persistent (co)homology or relative persistent (co)homology.  We also consider long (exact) sequence of relative pairs for persistence homology.  In this case, we observe that the sequence for persistent homology of a pair is not exact but again exact of order $2$ and show that it becomes exact if one works with the persistent modules.

Before we finish this section let us point out that throughout the paper, we always work with a field coefficient $\bF$ which will be suppressed in our notation.  Many things might go wrong when one replaces the field $\bF$ with a ring, for example we do not have Theorem~\ref{structure}.  Because of this reason we do not consider universal coefficient theorem here. The duality between persistent homology and cohomology with field coefficients is just the duality between vector spaces.  Moreover, the persistent homology and cohomology have the same barcodes \cite{VDM}.  Our conclusions in terms of persistent homology are still true in terms of persistent cohomology.  For the duality between persistent homology and persistent cohomology we refer the reader to \cite{VDM}.



\section{Preliminaries}

In this section, we review some background knowledge and give some preliminary definitions on discrete Morse theory and persistent homology that will be used throughout 
the paper.  For more details we refer the reader to  \cite{zomorodian2005computing},  \cite{forman1} and \cite{HJ}.

\subsection{Discrete Morse Functions}

Let $K$ be a simplicial complex.  A real valued function $f\colon K\rightarrow \bR$ is a discrete Morse function if for any $p$-cell 
$\sigma \in K$, there is at most one $(p+1)$-cell $\tau$ containing $\sigma$ in its boundary such that $f(\tau)\leq f(\sigma)$ and there 
is at most one $(p-1)$-cell $\nu$ contained in the boundary of $\sigma$ such that $f(\nu)\geq f(\sigma)$.  A $p$-cell $\sigma \in K$ 
is a critical $p$-cell of $f$ if $f(\nu)< f(\sigma)< f(\tau)$ for any $(p-1)-$face $\nu$ and $(p+1)-$coface $\tau$ of $\sigma$.  
A cell is regular if it is not critical.

The function $f$ can also be denoted by arrows pointing in the direction of decrease  and in this way form a gradient vector field where 
regular cells appear in disjoint pairs,
$$
V = \{(\sigma\to \tau) \mid \dim \sigma=\dim \tau-1, \sigma<\tau, f(\sigma)\geq f(\tau)\}.
$$
The number of critical $p$-cells of a discrete Morse function $f$ is always greater than or equal to the $p$-th Betti number of 
$K$ \cite[Corollary~3.7]{forman1}.  A discrete Morse function is called perfect (with respect to the given coefficient ring) if the 
number of critical $p$-cells and  the $p$-th Betti number are the same.

Let $f$ be a discrete Morse function on $K$ and $u \in \bR$.  The sublevel complex $K(u)$ of $K$ is defined as 
$$ 
\displaystyle  K(u) = \bigcup_{ f(\beta)\leq u} ~ \bigcup_{\alpha\leq \beta} ~ \alpha. 
$$
In other words, $K(u)$ contains all cells $\beta$ where $f(\beta)\leq u$ with all of their faces and hence is a subcomplex of $K$.   
 

\subsection{Persistent Homology}

Let $X$ be a topological space and $\phi: X\rightarrow \bR$ be a continuous function.  For $u\in \bR$ a sublevel set of $X$ 
obtained by $\phi$ is defined as 
$$
X_u=\{x\in X  |  \phi(x)\leq u \}.
$$
Note that $\mathcal{F}=\{X_u  |  u=1, \ldots, n\}$ provides a filtration of the space $X$. 

For $ u<v \in \bR $, the $k-th$ persistent homology group $H_k ^{u,v}(X)$ is defined as the image of the homomorphism 
$i_k^{u,v}\colon H_k(X_u)\rightarrow H_k(X_v)$ induced by the inclusion $X_u\hookrightarrow X_v$, 
$$
H_k ^{u, v}(X):=\Ima i_k^{u, v}.
$$ 
Equivalently, the persistent homology groups consist of the homology classes of $X_u$ that are still alive at $X_v$.


\subsection{Persistent Modules}

Next, we give the definition of a persistence module.  As notation we use $H^{u}_{k}(X):=H_{k}(X_u; \bF)$.

\begin{definition}({\cite{zomorodian2005computing}, \cite{HJ} })
Let $\emptyset=X_0\subset X_1\subset \ldots \subset X_u\subset \ldots X_n=X$ be a filtration of $X$.
The $k$-th persistence module of the space $X$ is a family of $k$-th homology modules $H^{u}_{k}(X)$, together with module homomorphisms 
$i_k ^{u}\colon H^{u}_{k}(X) \to H^{u+1}_{k}(X)$ induced by the inclusions $i^{u}:X_u\rightarrow X_{u+1}$.  
A persistence module is said to be of finite type if each component module is finitely generated. 
\end{definition}

The $k$-th persistence module can be given the structure of a finitely generated graded module over a polynomial ring $R[x]$
$$
\mathcal{H}_{k}(X)=\bigoplus^{n} _{u=0} H^{u}_{k}(X).
$$ 
The action of $x$ is given by 
$$
x\cdot (m^0, m^1, m^2, \ldots, m^n)= (0, i_k^{0}(m^0), i_k^{1}(m^1), i_k^{2}(m^2), \ldots, i_k^{n}(m^n) ),
$$ 
where $i_k^{n}(m^n)=m^n$, for $m^n \in H^{n}_{k}(X)$.  That is, the action of the polynomial ring connects the homologies 
across different complexes in the filtration of $X$.  Properties of this algebraic object can tell us all the information  
abot the $k$-th persistent homology groups $H_k^{u, v}$.

\begin{remark}
Since we work with a compact space $X$, the $k$-th persistence module of $X$ is always of finite type.  
\end{remark}

By the structure theorem of graded modules over PIDs \cite{HJ}, we have 

\begin{theorem}{\cite{zomorodian2005computing}} \label{structure}
Suppose $\mathcal{H}_{k}(X)$ is over the polynomial ring $\bF[x]$, where $\bF$ is a field.  Then 
$$
\mathcal{H}_{k}(X)=(\bigoplus _{i} (x^{a_i}))\oplus (\bigoplus _{j} (x^{b_j})/(x^{c_j}))
$$ 
where the sums range over $1\leq i\leq M$ and $1\leq j\leq N$ for non-negative integers $M, N$ and 
$a_i, b_j, c_j$ are non-negative integer powers of $x$.
\end{theorem}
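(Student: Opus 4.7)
The plan is to reduce the statement to the classical structure theorem for finitely generated modules over a principal ideal domain, and then to use the graded refinement of that theorem together with the very simple structure of homogeneous ideals in $\bF[x]$.

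First I would record the prerequisites. Since $\bF$ is a field, $\bF[x]$ is a PID, and it is a graded ring with the standard grading $\deg(x) = 1$. By the preceding remark, the compactness of $X$ forces each $H_{k}^{u}(X)$ to be a finite dimensional $\bF$-vector space and the filtration to have only finitely many stages; concatenating bases of the finitely many $H_{k}^{u}(X)$ gives a finite $\bF[x]$-generating set of $\mathcal{H}_{k}(X)$. Thus $\mathcal{H}_{k}(X)$ is a finitely generated graded $\bF[x]$-module.

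Next I would invoke the graded version of the structure theorem: every finitely generated graded module over a graded PID decomposes as a direct sum of cyclic graded modules, that is, into (possibly shifted) copies of $\bF[x]$ and of $\bF[x]/(p)$ for $p$ a homogeneous prime power. To justify the graded version one replays the usual Smith normal form / primary decomposition argument using homogeneous elements of minimal degree at each step, so that every generator and relation produced is homogeneous. The key point specific to $\bF[x]$ with the standard grading is that its only homogeneous prime ideals are $(0)$ and $(x)$: any nonzero homogeneous element is a scalar multiple of some $x^{m}$, so every homogeneous ideal is generated by a power of $x$. Consequently the torsion cyclic summands are forced into the form $\bF[x]/(x^{c})$, up to a grading shift.

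Finally I would translate shifted summands into the ideal notation of the statement. A shift of the free module $\bF[x]$ that begins in degree $a_{i}$ is realized concretely as the principal ideal $(x^{a_{i}}) \subset \bF[x]$; a shift of the torsion module $\bF[x]/(x^{c})$ that begins in degree $b_{j}$ is realized as $(x^{b_{j}})/(x^{c_{j}})$ for suitable $b_{j} \leq c_{j}$. Collecting the free summands into $\bigoplus_{i}(x^{a_{i}})$ and the torsion summands into $\bigoplus_{j}(x^{b_{j}})/(x^{c_{j}})$ yields the stated decomposition.

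The only genuine obstacle is the graded refinement of the classical structure theorem, i.e.\ the assertion that all choices in Smith normal form can be made homogeneously. Once that is granted, the classification of homogeneous primes in $\bF[x]$ and the translation of shifted cyclic modules into the $(x^{a_{i}})$ and $(x^{b_{j}})/(x^{c_{j}})$ notation are routine; the result is precisely the form cited from \cite{zomorodian2005computing}.
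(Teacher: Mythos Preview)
Your sketch is correct and follows the standard route: finite generation from compactness, the graded structure theorem over the graded PID $\bF[x]$, the observation that the only nonzero homogeneous ideals of $\bF[x]$ are $(x^{m})$, and the translation of grading shifts into the $(x^{a_i})$ and $(x^{b_j})/(x^{c_j})$ notation. There is nothing to compare against, however, because the paper does not prove this theorem; it simply quotes it from \cite{zomorodian2005computing}, prefacing the statement with a reference to the structure theorem of graded modules over PIDs \cite{HJ}. Your argument is essentially the proof one finds in those cited sources, so in spirit it matches what the paper is invoking.
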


The powers in the above expression captures topological features throughout the filtration. 
They represent the indices in the filtration sequence for which either a new $k$-dimensional 
hole arises or for which a previously existing $k$-dimensional hole, that appear before disappears. The free component gives the features that appear at a certain index but do not disappear as the complex grows. The torsion component encodes the holes that appear for a while and then disappear at a later index.


\section{The Mayer-Vietoris Formula for Persistent Homology}

In this section, we recall that the Mayer-Vietoris sequence for persistent homology groups is not exact but it is of order two \cite{di2011mayer}.
We give a concrete example showing that the Mayer-Vietoris sequence is not exact.  We also show that if one works with graded persistence modules 
instead of persistent homology groups, then the Mayer-Vietoris sequence is exact.

Recall that the Mayer-Vietoris sequence allows one to compute the homology of a complicated topological space $X$ in terms of relatively simpler 
subspaces $A\subset X$, $ B\subset X$ where $X$ is the union of the interiors of $A$ and $B$.  For such a pair of subspaces $A, B \subset X$, 
the Mayer-Vietoris exact sequence has the following form
$$
\displaystyle
\xymatrix @-1pc 
{ &{\cdots}\ar[r] & H_{k+1}(X)\ar[r]^-{\delta_k} & H_{k}(A\cap B) \ar[r]^-{\alpha_k} & H_k(A) \oplus H_k(B)\ar[r]^-{\beta_k}&  H_k(X) \ar[r] & {\cdots}\ar[r]& H_0(X) \ar[r] & 0}
$$
where 
$$ 
\delta_k([x])=[\partial(x|_A)], \  \alpha_k([y])=([y], [-y]) \ \textrm{and} \  \beta_k(([z], [z']))=[z+z'].
$$ 

In the case of very large $X$, it may often be difficult to apply persistent homology algorithm to all of $X$ and one may instead find it easier to 
work with relatively simpler subspaces.  A Mayer-Vietoris sequence would increase our ability in terms of visualization by partitioning a set of data 
into number of parts which are easily recognizable.

Let $X_u, A_u, B_u$ and $(A\cap B)_u$ denote the sublevel sets as in \cite{di2011mayer}, and  consider the following diagram:
\begin{equation}
\begin{split}
\xymatrix @-0,5pc  
{
\cdots \ar[r] & H_{k+1}(X_u)\ar[r]^-{\delta^u_k} \ar[d]^-{h_{k+1}} & H_k((A \cap B)_u)\ar[r]^-{\alpha^u_k}\ar[d]^-{f_k} & H_k(A_u)\oplus H_k(B_u)\ar[r]^-{\beta^u_k}\ar[d]^-{g_k} & 
H_{k}(X_u)\ar[r]\ar[d]^-{h_k}& \cdots \\
\cdots \ar[r] & H_{k+1}(X_v)\ar[r]^-{\delta^v_k} & H_{k}((A \cap B)_v) \ar[r]^-{\alpha^v_k} & H_{k}(A_v)\oplus H_{k}(B_v)\ar[r]^-{\beta^v_k} & 
H_{k}(X_v)\ar[r] & \cdots
} 
\end{split}
\end{equation}
\noindent
where the horizontal lines belong to the usual Mayer-Vietoris sequence of the triads $(X_u, A_u, B_u)$, and $(X_v, A_v, B_v)$.  
The horizontal homomorphisms are defined as $\delta^u_k([x])=[\partial(x|_A)]$, $\alpha^u_k([y])=([y], -[y])$ and $\beta^u_k(([z], [z']))=[z+z']$. 

\begin{lemma}\cite{di2011mayer}\label{exactness}
Each horizontal line in above diagram is exact.  Moreover, each square in the same diagram is commutative.
\end{lemma}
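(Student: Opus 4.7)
The plan is to reduce both assertions to well-known classical facts about the Mayer-Vietoris sequence applied level-by-level to the filtration.

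For exactness of each horizontal row, I would first verify that the triad $(X_u, A_u, B_u)$ is excisive, by checking the compatibility $A_u \cap B_u = (A \cap B)_u$ and $A_u \cup B_u = X_u$. These identities follow immediately from the definition of sublevel sets, since the filtering function on $A$, $B$, and $A \cap B$ is the restriction of the function on $X$: a cell (or point) $\sigma$ with $f(\sigma) \leq u$ lies in $A_u$ precisely when $\sigma \in A$, and similarly for $B$ and $A \cap B$. With this in hand, the classical Mayer-Vietoris theorem applied to the triad $(X_u, A_u, B_u)$ produces the exact top row, and the identical argument at level $v$ yields the bottom row.

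For commutativity of each square, I would invoke naturality of the Mayer-Vietoris sequence with respect to maps of triads. The inclusion $X_u \hookrightarrow X_v$ restricts (thanks again to the sublevel set identities above) to inclusions $A_u \hookrightarrow A_v$, $B_u \hookrightarrow B_v$, and $(A \cap B)_u \hookrightarrow (A \cap B)_v$, so together they define a morphism of triads $(X_u, A_u, B_u) \to (X_v, A_v, B_v)$; the vertical maps $h_{k+1}, f_k, g_k, h_k$ are precisely the homology maps induced by this morphism. Commutativity of the $\alpha_k$-square and the $\beta_k$-square is immediate from the cell-level formulas $\alpha_k([y])=([y],-[y])$ and $\beta_k([z],[z'])=[z+z']$, both of which are defined purely by chain inclusion and thus commute with any inclusion of subspaces. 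Commutativity of the $\delta_k$-square follows from the standard naturality of the connecting homomorphism: representing $[x] \in H_{k+1}(X_u)$ as $x = x_A + x_B$, the Mayer-Vietoris boundary sends $[x]$ to $[\partial(x|_A)]$, and since this decomposition is preserved by the inclusion $X_u \hookrightarrow X_v$, computing the boundary before or after passing to $H_k((A \cap B)_v)$ yields the same class.

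The main obstacle, modest as it is, is the excisiveness/compatibility check on sublevel sets; once that is in place, the lemma is a direct instance of the naturality of the classical Mayer-Vietoris sequence and requires no further computation. It is worth emphasizing that this lemma is precisely what allows one to attempt to build a persistent Mayer-Vietoris sequence in the next step -- and, as the paper goes on to explain, is also the reason failure of exactness in the persistent setting can be traced entirely to the four-lemma-style gap between commuting squares and exact rows rather than to any defect of the individual triads $(X_u, A_u, B_u)$.
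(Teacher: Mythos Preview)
Your proposal is correct and follows the natural approach. Note, however, that the paper does not actually supply a proof of this lemma: it is simply cited from \cite{di2011mayer}. The closest the paper comes to an argument is the one-line proof of the analogous Lemma~\ref{comd} for the relative pair, which reads ``This follows from the exactness of the relative homology sequence of the pairs $(X_u, A_u)$ and $(X_v, A_v)$ and naturality of the diagram.'' Your proof is a fleshed-out version of exactly this strategy transported to the Mayer--Vietoris setting: verify that the sublevel sets form an excisive triad at each level, invoke the classical Mayer--Vietoris sequence for exactness of the rows, and appeal to naturality under the inclusion of triads for commutativity of the squares. The only substantive content you add beyond the paper's one-liner is the explicit check that $A_u\cap B_u=(A\cap B)_u$ and $A_u\cup B_u=X_u$, which is indeed the point that needs to be said and which you handle correctly.
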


Note that by definition, we have for every $k \in \mathbb{Z}$, and every $u<v \in \bR$,
\begin{enumerate}
\item $\im f_k = H^{u,v}_{k}(A \cap B)$,
\item $\im g_k = H^{u,v}_{k}(A) \oplus H^{u,v}_{k}(B)$,
\item $\im h_k = H^{u,v}_{k}(X)$.
\end{enumerate}

The following proposition states that the commutativity of squares in above diagram induces a Mayer-Vietoris sequence of order $2$ involving 
the $k$-th persistent homology groups for every integer $k$.

\begin{proposition}\cite{di2011mayer}	
Let us consider the sequence of homomorphisms of persistent homology groups
\begin{equation}
\displaystyle
\xymatrix @-1pc 
{ &{\cdots}\ar[r] & H^{u,v}_{k+1}(X)\ar[r]^-{\delta} & H^{u,v}_{k}(A\cap B) \ar[r]^-{\alpha} & H^{u,v}_{k}(A) \oplus H^{u,v}_{k}(B)\ar[r]^-{\beta}& H^{u,v}_{k}(X) \ar[r] & {\cdots}& }
\end{equation}
\noindent
where $\delta=\delta_k^{v}|_{\im h_{k+1}}$, $\alpha=\alpha_k^{v}|_{\im f_{k}}$, and $\beta=\beta_k^{v}|_{\im g_{k}}$, is of order $2$.  That is, 
\begin{enumerate}
	\item $ \im \delta \subseteq \ker \alpha $,
	\item $ \im \alpha \subseteq \ker \beta$,
	\item $ \im \beta \subseteq \ker \delta$.
\end{enumerate}	
\end{proposition}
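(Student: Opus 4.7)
The plan is to prove each of the three inclusions by a straightforward diagram chase, using commutativity of the squares in diagram $(3.1)$ to convert a composition of persistent maps (which are restrictions of the $v$-level maps) into an equivalent composition routed through the $u$-level row, where Lemma~\ref{exactness} gives the needed exactness.

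For $\im \delta \subseteq \ker \alpha$, I would start with an arbitrary element of $\im \delta$, which by definition of $\delta = \delta_k^v \restriction \im h_{k+1}$ has the form $\delta_k^v(h_{k+1}(x))$ for some $x \in H_{k+1}(X_u)$. Using commutativity of the leftmost square, rewrite this as $f_k(\delta_k^u(x))$, so it lies in $\im f_k$, where $\alpha$ is defined. Then
$$
\alpha(\delta_k^v(h_{k+1}(x))) \;=\; \alpha_k^v(f_k(\delta_k^u(x))) \;=\; g_k(\alpha_k^u(\delta_k^u(x))) \;=\; 0,
$$
where the second equality uses commutativity of the next square and the last uses exactness of the top row at $H_k((A\cap B)_u)$.

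The other two inclusions follow the same template. For $\im \alpha \subseteq \ker \beta$, take $\alpha_k^v(f_k(y))$ with $y \in H_k((A\cap B)_u)$; by the commutativity $\alpha_k^v \circ f_k = g_k \circ \alpha_k^u$, it lies in $\im g_k$, and
$$
\beta(\alpha_k^v(f_k(y))) \;=\; \beta_k^v(g_k(\alpha_k^u(y))) \;=\; h_k(\beta_k^u(\alpha_k^u(y))) \;=\; 0
$$
by the remaining square's commutativity and exactness at $H_k(A_u)\oplus H_k(B_u)$. For $\im \beta \subseteq \ker \delta$, start with $\beta_k^v(g_k(z))$, rewrite it as $h_k(\beta_k^u(z))$ (so it lies in $\im h_k$), then compute $\delta$ applied to it as $\delta_{k-1}^v(h_k(\beta_k^u(z))) = f_{k-1}(\delta_{k-1}^u(\beta_k^u(z))) = 0$ using exactness at $H_k(X_u)$.

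There is essentially no substantive obstacle here: the argument is a purely formal diagram chase, and the only thing to be careful about is bookkeeping—namely, verifying at each step that the element one is tracking actually lies in the image of the appropriate vertical map (so that the restricted persistent map is defined on it), which is automatic from the commutativity rewrites. The work has already been done in assembling diagram~$(3.1)$ and in Lemma~\ref{exactness}; the proposition is a direct corollary.
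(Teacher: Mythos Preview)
Your proof is correct, and the approach is essentially the same as the paper's: the paper does not reprove this cited proposition explicitly, but it does prove the analogous Proposition~\ref{relativemayer} by the same kind of diagram chase, with the only cosmetic difference that the paper invokes exactness of the bottom ($v$-level) row (noting $\sigma\in\im\delta_k^v=\ker\alpha_k^v$ directly) whereas you route through the top ($u$-level) row via commutativity. Both arguments are equivalent and equally short.
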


Although it is given that $\im \delta \subseteq \ker\alpha$, by diagram chasing one can easily show that $\ker\alpha \subseteq \im \delta $ is not always the case.
For example, let $c \in \ker \alpha \subseteq H^{u,v}_{k}(A \cap B)$.  By definition $c \in H_k((A \cap B)_v)$ and also it has to be 
the image of a class in $H_k((A \cap B)_u)$, abusing the notation we denote this class also with $c$. Moreover, $\alpha = {\alpha}^v_k |_{\im f_k}$
implies that $c \in \ker {\alpha}^v_k $.  So, there exists a $d \in H_{k+1}(X_v)$ such that ${\delta}^v_k (d) = c$ since the second row of the above 
diagram is exact.  On the other hand, $c \in H^{u,v}_{k}(A \cap B)$ implies $c \in H_k((A \cap B)_u)$.  But if ${\alpha}^u_k (c) \neq 0$, then we 
cannot be sure that there exist a $d \in H_{k+1}(X_u) $ such that $d \in H^{u,v}_{k+1}(X) $.  Therefore, $c$ may not be in the $\im \delta$. 
Let us further explain this situation through the following example.

\begin{example}\label{example}
For a given complex $X$, a filtration is defined as a nested sequence of its subcomplexes often generated by a  filtering function, which in our example is a 
perfect discrete Morse function.

Consider the triangulation of the torus with a perfect discrete Morse function on it.  The subspaces $A$ and $B$ are shown in Figure \ref{fig1}.  
\begin{figure}[ht]
\centering
\includegraphics[scale=0.60]{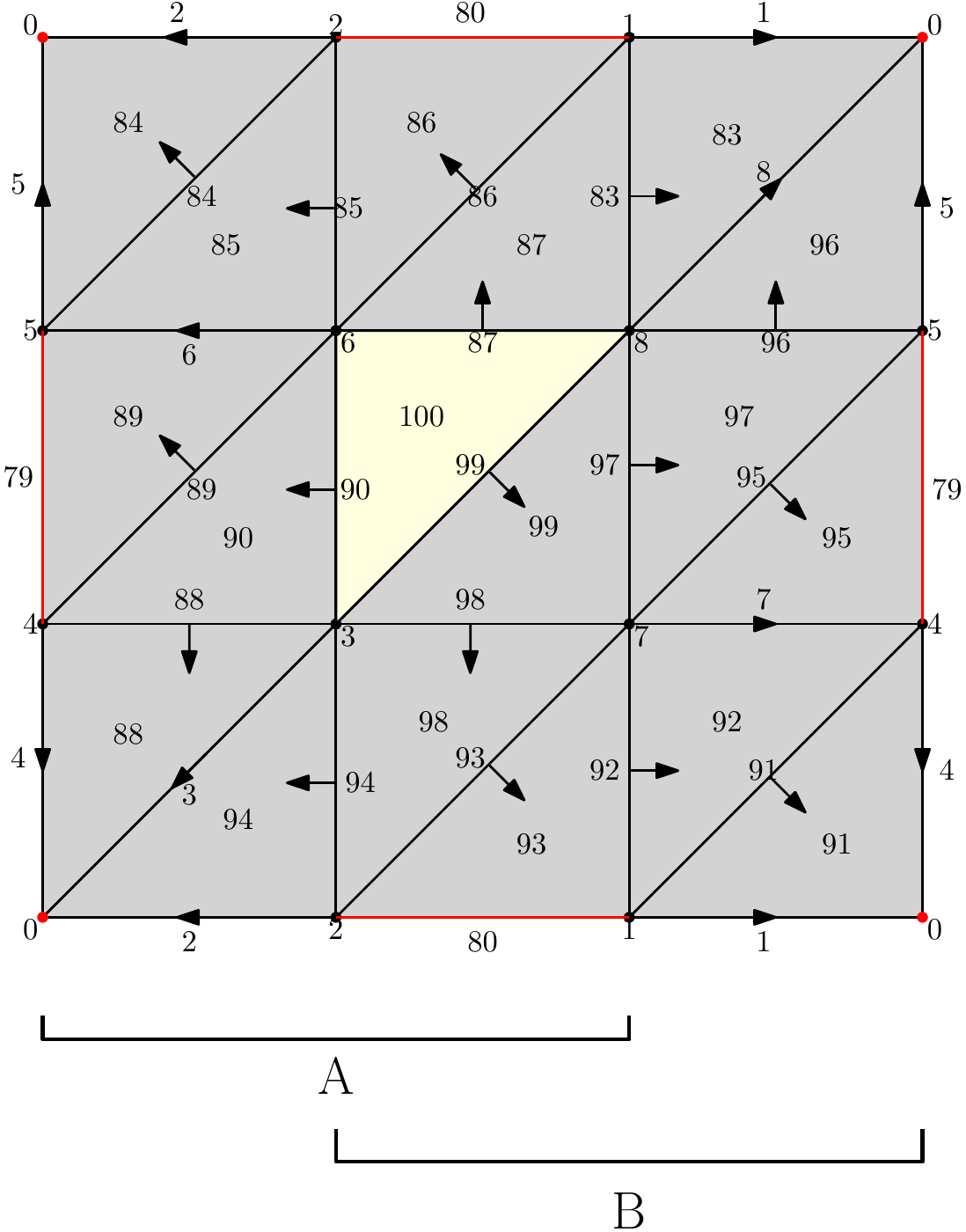}
\caption{Perfect discrete Morse function on the triangulation of a torus.}
\label{fig1}
\end{figure} 
The perfect discrete Morse function gives a filtration for $X$, $A$, $B$ and $A \cap B$ as follows:
\begin{align*}
&\emptyset \subseteq X_0 \subseteq X_6 \subseteq X_8 \subseteq X_{79} \subseteq X_{95} \subseteq X_{100} = X.\\&
\emptyset \subseteq A_0 \subseteq A_6 \subseteq A_8 \subseteq A_{79} \subseteq A_{95} \subseteq A_{100} = A.&\\&
\emptyset \subseteq B_0 \subseteq B_6 \subseteq B_8 \subseteq B_{79} \subseteq B_{95} \subseteq B_{100} = B.\\&
\emptyset \subseteq (A \cap B)_0 \subseteq (A \cap B)_6 \subseteq (A \cap B)_8 \subseteq(A \cap B)_{79} \subseteq (A \cap B)_{95} \subseteq (A \cap B)_{100} = A \cap B.
\end{align*}

Note that the indices in the filtration comes from the values of given perfect discrete Morse function. See Figure \ref{fig3} for the filtration level $95$.
\begin{figure}[ht]
\centering
\includegraphics[scale=0.70]{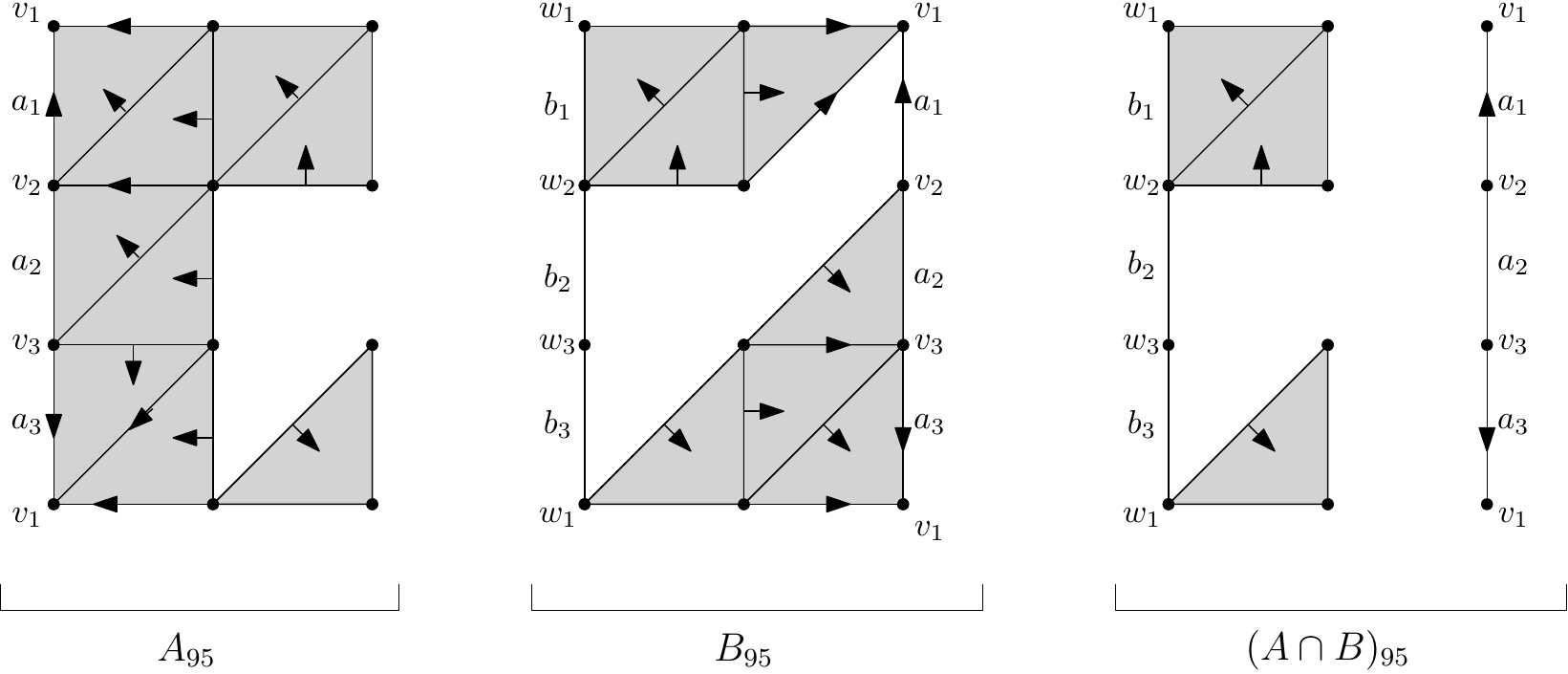}
\caption{}
\label{fig3}
\end{figure}
	
Let us consider the Morse chain complex for the level $B_{95}$ in figure \ref{fig3}:
$$
\displaystyle
\xymatrix @-0.7pc 
{ & 0\ar[r] & C_2\ar[r]^-{\partial_2} & C_1\ar[r]^-{\partial_1} & C_0\ar[r]& {0}& }
$$
	
By using Morse homology, we can induce the boundary maps as follows: 
$\partial_1(b_1)=w_1 - w_2\neq 0$, $\partial_1(b_2)=w_2 - w_3\neq 0$, $\partial_1(b_3)=w_3 - w_1\neq 0$ but $\partial_1(b_1+b_2+b_3)=0$.  
Therefore $b_1+b_2+b_3$ is a generator of $H_1(B_{95})$ call it $\gamma$.  
Similarly, $a_2$ is also a generator of $H_1(B_{95})$ call it $\alpha$.
	
After applying the same process we get:

\begin{eqnarray*}
 H_1(A_{95})&=&\langle\alpha\rangle,\\ H_1(A_{100})&=& \langle\alpha\rangle,\\
 H_1(B_{95})&=& \langle\alpha, \gamma \rangle,\\ H_1(B_{100}) &=& \langle \alpha \rangle,\\
  H_1((A \cap B)_{95}) &=& \langle \alpha, \gamma \rangle,\\ H_1((A \cap B)_{100})& =& \langle \alpha, \gamma \rangle.
\end{eqnarray*}
By the definition of persistent homology group we have,	
\begin{eqnarray*}
 H^{95,100}_1(A) &=& \langle \alpha \rangle,\\
 H^{95,100}_1(B)&=&\langle \alpha \rangle,\\
H^{95,100}_1(A \cap B) &=& \langle \alpha, \gamma \rangle
\end{eqnarray*}
	
Now, consider the Mayer-Vietoris sequence for persistent homology:
$$
\displaystyle
\xymatrix @-1pc 
{ &{0}\ar[r]^-{\delta} & H^{95,100}_{1}(A \cap B)\ar[r]^-{\alpha} & H^{95,100}_{1}(A) \oplus H^{95,100}_{1}(B)\ar[r]^-{\beta}& H^{95,100}_{1}(X) \ar[r] & {\cdots}& }
$$
Clearly, $\im \delta_1 = 0$. Also $\ker \alpha_1 = \langle \gamma \rangle \neq 0$.  Therefore, $\Ima \delta_1 \neq \ker \alpha_1$ which implies this sequence is not exact.
\end{example}

Next, we show that if instead one works with the persistent modules the Mayer-Vietoris sequence for graded persistence modules is exact.  Before we state the theorem, 
let us define
$$
s^u_k\colon H_k((A \cap B)_u)\to H_k(A_u) \ \textrm{defined by} \ s^u_k([y])=[y]
$$
and also 
$$
t^u_k\colon H_k((A \cap B)_u)\rightarrow H_k(B_u)  \ \textrm{defined by} \ t^u_k([y])=[-y]
$$ 
so that $\alpha^u_k([y])=(s^u_k([y]),t^u_k([y]))$.

\begin{theorem}\label{Mayermodule}
The Mayer-Vietoris sequence of graded persistence modules 

\begin{equation}
\begin{split}
\displaystyle
\xymatrix @-1pc 
{ &{\cdots}\ar[r] & \mathcal{H}_{k+1}(X)\ar[r]^-{\delta} & \mathcal{H}_{k}(A\cap B) \ar[r]^-{\alpha} & 
\mathcal{H}_{k}(A) \oplus \mathcal{H}_{k}(B)\ar[r]^-{\beta}& \mathcal{H}_{k}(X) \ar[r] & {\cdots}& }
\end{split}
\end{equation}

where 
\begin{align}
\delta(m^0, m^1, \ldots, m^n )&=(\delta_k^{0}(m^0), \delta_k^{1}(m^1), \ldots, \delta_k^{n}(m^n)),\nonumber
\\
 \alpha(y^0, y^1, \ldots, y^n)&=((s_k^{0}(y^0), s_k^{1}(y^1), \ldots, s_k^{n}(y^n)), (t_k^{0}(y^0), t_k^{1}(y^1), \ldots, t_k^{n}(y^n))),\nonumber
\\
 \beta(a, b)&=(\beta_k^{0}(a^0, b^0), \beta_k^{1}(a^1, b^1), \ldots, \beta_k^{n}(a^n,b^n)),\nonumber 
 \end{align}
for all  $m^u \in H^{u}_{k+1}(X)$,  $y^u \in H^{u}_{k}(A\cap B)$, $a^u \in H^{u}_{k}(A)$ and $b^u \in H^{u}_{k}(B)$,  is exact. 
That is, 
\begin{enumerate}
\item $ \im \delta = \ker \alpha $,
\item $ \im \alpha = \ker \beta$,
\item $ \im \beta = \ker \delta$.
\end{enumerate}
\end{theorem}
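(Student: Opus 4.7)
The plan is to reduce exactness of the sequence of graded $\bF[x]$-modules to exactness at each individual grade, where it collapses to the classical Mayer--Vietoris sequence of the sublevel triad $(X_u, A_u, B_u)$, already handled by Lemma~\ref{exactness}.

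First, I would verify that $\delta$, $\alpha$, $\beta$ are well-defined homomorphisms of graded $\bF[x]$-modules. Additivity is immediate because they are assembled componentwise from the $\bF$-linear maps $\delta^u_k, \alpha^u_k, \beta^u_k$. The nontrivial point is $\bF[x]$-linearity: the $x$-action on the grade-$u$ component is precisely the inclusion-induced map $i^u_k \: H^u_k \to H^{u+1}_k$, so compatibility with $x$ is equivalent to the commutativity of every square in the Mayer--Vietoris ladder with $v = u+1$. That commutativity is exactly the second assertion of Lemma~\ref{exactness}. The same ladder, applied to the squares that define $s^u_k$ and $t^u_k$, gives the $\bF[x]$-linearity of $\alpha$.

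Next, I would invoke the standard fact that a sequence of graded homomorphisms of graded modules is exact if and only if it is exact in each grade. In grade $u$, the sequence becomes
$$
\cdots \to H^u_{k+1}(X) \xrightarrow{\delta^u_k} H^u_k(A\cap B) \xrightarrow{\alpha^u_k} H^u_k(A)\oplus H^u_k(B) \xrightarrow{\beta^u_k} H^u_k(X) \to \cdots
$$
which is the classical Mayer--Vietoris sequence for the triad $(X_u,A_u,B_u)$, and hence exact by the first assertion of Lemma~\ref{exactness}. Taking the direct sum over $u$ and combining with the first step yields exactness of the full graded sequence, establishing all three equalities $\im\delta=\ker\alpha$, $\im\alpha=\ker\beta$, $\im\beta=\ker\delta$.

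The main (minor) obstacle is the bookkeeping: one must check that the componentwise definition of $\alpha$ through the auxiliary maps $s^u_k, t^u_k$ really produces a graded $\bF[x]$-module map into $\mathcal{H}_k(A)\oplus \mathcal{H}_k(B)$, and that the failure observed in the persistent homology setting, namely that $\im f_k$, $\im g_k$, $\im h_k$ can be proper subspaces of the grade-$v$ homologies, is circumvented precisely because the persistence modules record each grade in full (not merely the image of the inclusion from an earlier grade). Once this is made explicit, the classical Mayer--Vietoris exactness at each grade transfers directly to the graded module level.
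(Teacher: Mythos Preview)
Your proposal is correct and follows essentially the same route as the paper: both reduce exactness of the graded sequence to exactness at each individual grade $u$, where it is precisely the classical Mayer--Vietoris sequence for the triad $(X_u,A_u,B_u)$ supplied by Lemma~\ref{exactness}. The paper carries this out by an explicit componentwise diagram chase (for claim~(i) only, with the others declared analogous), while you package the same argument as the general principle ``graded-exact $\Leftrightarrow$ exact in every grade''; your additional verification that $\delta,\alpha,\beta$ are $\bF[x]$-linear via the commuting squares of Lemma~\ref{exactness} is a point the paper leaves implicit.
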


\begin{proof}
We only prove  $(i)$.  The other claims can be proven analogously.  

Let $(y^0, y^1, \ldots, y^n)\in \im \delta$.  Then there exist an element $(m^0, m^1, \ldots, m^n)\in \mathcal{H}_{k+1}(X)$ such that 
$$
\delta(m^0, m^1, \ldots, m^n)=(\delta_k^{0}(m^0), \delta_k^{1}(m^1), \ldots, \delta_k^{n}(m^n))=(y^0, y^1, \ldots, y^n).
$$  
In particular, $\delta_k^{u}(m^u)=y^u$ and  $y^u\in\im\delta_k^{u}=\ker\alpha_k^{u}$ for all $u$ by Lemma~\ref{exactness}.  
Then we get 
$\alpha_k^{u}(y^u)=(s_k^{u}(y^u), t_k^{u}(y^u))=(0,0)$ and $s_k^{u}(y^u)=t_k^{u}(y^u)=0$.  
Therefore, we have 
\begin{align}
\alpha(y^0, y^1, \ldots, y^n)&=((s_k^{0}(y^0), s_k^{1}(y^1), \ldots, s_k^{n}(y^n)), (t_k^{0}(y^0), t_k^{1}(y^1), \ldots, t_k^{n}(y^n)))\nonumber\\
&=((0,0,\ldots, 0), (0,0,\ldots, 0)) \nonumber
\end{align}
and $(y^0, y^1, \ldots, y^n)\in \ker \alpha$.  Hence we get that $\im \delta \subseteq \ker \alpha$.

Let $(c^0, c^1, \ldots, c^n )\in \ker \alpha$.
Then 
\begin{align}
\alpha(c^0, c^1, \ldots, c^n)&=((s_k^{0}(c^0), s_k^{1}(c^1), \ldots, s_k^{n}(c^n)), (t_k^{0}(c^0), t_k^{1}(c^1), \ldots, t_k^{n}(c^n))) \nonumber\\
 &=((0,0, \ldots, 0),(0,0, \ldots, 0))\nonumber.
\end{align}
By equalities of the tuples,  we get $s_k^{u}(c^u)=0$ and $t_k^{u}(c^u)=0$ for all $c^u\in H^{u}_{k}(A\cap B)$ 
Thus, $c^u\in \Ker \alpha_k^{u}=\Ima\delta_k^{u}$ by Lemma~\ref{exactness}.  Then there exists $m^u\in H^{u}_{k+1}(X)$ such that $\delta_k^{u}(m^u)=c^u$ for all $u\geq 0$ 
and we get 
$$
(c^0, c^1, \ldots, c^n)=(\delta_k^{0}(m^0),\delta_k^{1}(m^1), \ldots, \delta_k^{n}(m^n))=
 \delta(m^0, m^1, \ldots, m^n).
$$ 
Therefore, $(c^0, c^1, \ldots, c^2)\in \im \delta$ which implies that $\ker \alpha \subseteq \im \delta $. 
\end{proof}

Now, let us briefly see how the sequence that is just constructed gives an exact sequence for the Example~\ref{example}.  Recall that in Example~\ref{example}, 
the reason for non-exactness is that there is a $\gamma \in H^{95,100}_{1}(A \cap B)$ that belongs to $\ker \alpha$ but not in $\im \delta$.  
In terms of persistence modules $\gamma$ corresponds to $(0, 0, 0, 0, \gamma, 0)\in \mathcal{H}_1(A \cap B)$ and 
$\alpha(0, 0, 0, 0, \gamma, 0)=((0, 0, 0, 0, 0, 0), (0, 0, 0, 0, \gamma, 0)) \in \mathcal{H}_1(A) \bigoplus \mathcal{H}_1(B)$.  Hence it is not in $\ker \alpha$.  
Moreover $x \cdot (0, 0, 0, 0, \gamma, 0)= (0, 0, 0, 0, 0, \gamma)$ and  $\alpha(0, 0, 0, 0, 0, \gamma)=((0, 0, 0, 0, 0, 0), (0, 0, 0, 0, 0, 0))$.  But in this case 
$(0, 0, 0, 0, 0, \sigma)\in \mathcal{H}_2(X)$ and $\delta (0, 0, 0, 0, 0, \sigma)=(0, 0, 0, 0, 0, \gamma)$.


\section{Long (exact) sequence for persistence homology}

In this section, we show that the long sequence for persistence homology groups of a pair $(X, A)$, where $X$ is a compact, 
triangulated space and $A\subset X$, is not exact but exact of order $2$.  We give a concrete example which shows 
that the long sequence for the relative pair is not exact.  We also prove that the long sequence for the graded 
persistence modules of the pair $(X, A)$ is exact.

For any  $u <v \in \bR$, we can consider the following diagram:
\begin{equation}
\begin{split}
\displaystyle
\xymatrix @-0,5pc  
{ 
	\cdots \ar[r] & H_{k+1}(X_u,A_u)\ar[r]^-{\delta^u_k} \ar[d]^-{h_{k+1}} & H_k(A_u)\ar[r]^-{\alpha^u_k}\ar[d]^-{f_k} & H_k(X_u)\ar[r]^-{\beta^u_k}\ar[d]^-{g_k} & 
	H_{k}(X_u,A_u)\ar[r]\ar[d]^-{h_k}& \cdots \\
	\cdots \ar[r] & H_{k+1}(X_v, A_v)\ar[r]^-{\delta^v_k}& H_{k}(A_v) \ar[r]^-{\alpha^v_k} & H_{k}(X_v)\ar[r]^-{\beta^v_k} & 
	H_{k}(X_v, A_v)\ar[r] & \cdots \\
	}
\end{split}
\end{equation}

\noindent
where $\delta^u_k([c])= [(\partial c)|_{A_u}]$ and $\alpha^u_k$,  $\beta^u_k$ are the homomorphisms induced by inclusion and quotient map, respectively.

\begin{lemma}\label{comd}
Each horizontal line in the above diagram is exact.  Moreover, each square is commutative.
\end{lemma}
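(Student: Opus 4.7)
The plan is to read off both assertions of the lemma from classical algebraic topology, applied to the two pairs $(X_u, A_u)$ and $(X_v, A_v)$, together with naturality of the pair sequence under the inclusion of pairs $(X_u, A_u) \hookrightarrow (X_v, A_v)$ (which exists because the sublevel sets are monotone in $u$).

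For the exactness of each horizontal row, I would simply invoke the standard long exact sequence of a pair of simplicial complexes. Concretely, the short exact sequence of chain complexes
\begin{equation*}
0 \to C_\ast(A_w) \to C_\ast(X_w) \to C_\ast(X_w, A_w) \to 0
\end{equation*}
with $w\in\{u,v\}$ induces, via the snake (zig-zag) lemma, the long exact sequence whose connecting homomorphism is exactly $[c]\mapsto [(\partial c)|_{A_w}]$ as in the statement. So exactness is immediate.

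For commutativity, the two squares on either side of the connecting map involve only maps induced by inclusions of complexes (for $\alpha^\bullet_k$) and by projection to the relative chain complex (for $\beta^\bullet_k$). Both squares then commute by functoriality of $H_\ast$: each composition is induced at the chain level by composing an inclusion with an inclusion (or with a quotient), and these obviously agree on representative cycles. The only square that requires a genuine (though routine) verification is the one containing the connecting homomorphisms, and here I would just chase a class $[c]\in H_{k+1}(X_u, A_u)$: the inclusion $X_u\hookrightarrow X_v$ sends a relative cycle $c$ to the same chain regarded in $C_{k+1}(X_v)$, whose boundary still lies in $A_u\subseteq A_v$, and $(\partial c)|_{A_v}=(\partial c)|_{A_u}$ as chains. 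Thus both $f_k\circ \delta^u_k$ and $\delta^v_k\circ h_{k+1}$ send $[c]$ to the class of $(\partial c)|_{A_u}$ in $H_k(A_v)$, which is naturality of the connecting map.

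I do not anticipate a real obstacle: the content is the naturality of the long exact sequence of a pair, and the only care required is to keep track that the restriction of $\partial c$ to $A_u$ and to $A_v$ represent the same class in $H_k(A_v)$ under the map induced by $A_u\hookrightarrow A_v$. Once that chain-level identification is spelled out, both parts of the lemma are standard, exactly mirroring Lemma~\ref{exactness}.
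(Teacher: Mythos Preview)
Your argument is correct and is exactly the approach the paper takes: the paper's proof is the single sentence ``This follows from the exactness of the relative homology sequence of the pairs $(X_u, A_u)$ and $(X_v, A_v)$ and naturality of the diagram,'' and your proposal simply unpacks that sentence. No changes are needed.
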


\begin{proof}
This follows from the exactness of the rrelative homology sequence of the pairs $(X_u, A_u)$ and $(X_v, A_v)$ and naturality of the diagram.
\end{proof}

\begin{remark}
Note that, for every $u<v\in \bR$,  $(X_u, A_u)\subset(X_v, A_v)$ gives a filtration of $(X, A)$ since $X_u\subset X_v$ and $A_u\subset A_v$.  We denote 
$(X, A)_u:=(X_u, A_u)$ and $H^{u}_{k}((X, A)):=H_{k}((X, A)_u):=H_{k}(X_u, A_u)$.  By definition of persistent homology groups, we also have 
\begin{enumerate}
\item $\im f_k = H^{u,v}_{k}(A)$
\item $\im g_k = H^{u,v}_{k}(X)$
\item $\im h_k = H^{u,v}_{k}(X,A)$
\end{enumerate}

\end{remark}

\begin{proposition}\label{relativemayer}
	
The following sequence of homomorphisms of persistent homology groups
$$
\displaystyle
\xymatrix @-0.7pc 
{ &{\cdots}\ar[r] & H^{u,v}_{k+1}(X,A)\ar[r]^-{\delta} & H^{u,v}_{k}(A) \ar[r]^-{\alpha} & H^{u,v}_{k}(X) \ar[r]^-{\beta}& H^{u,v}_{k}(X,A) \ar[r] & {\cdots}& }
$$
where $\delta=\delta_k^{v}|_{\im h_{k+1}}$, $\alpha=\alpha_k^{v}|_{\im f_{k}}$, and $\beta=\beta_k^{v}|_{\im g_{k}}$, is exact of order $2$.  That is, 
\begin{enumerate}
\item $ \im \delta \subseteq \ker \alpha $,
\item $ \im \alpha \subseteq \ker \beta$,
\item $ \im \beta \subseteq \ker \delta$.
\end{enumerate}	

\end{proposition}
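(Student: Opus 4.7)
The plan is to prove all three inclusions by the same two-step diagram chase: first use the commutative squares of Lemma~\ref{comd} to check that the restricted maps $\delta$, $\alpha$, $\beta$ actually land in the claimed persistent homology subgroups, and then use the exactness of each horizontal row (again by Lemma~\ref{comd}) to kill the composition. Since by the remark $H^{u,v}_{k}(A) = \im f_k$, $H^{u,v}_{k}(X) = \im g_k$, and $H^{u,v}_{k}(X,A) = \im h_k$, every element of the relevant persistent group has a distinguished preimage in the top row that we can feed into the commutative square.

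For $(i)$, I would start with $w \in \im \delta$, so $w = \delta_k^{v}(c)$ for some $c = h_{k+1}(d) \in \im h_{k+1}$ with $d \in H_{k+1}(X_u, A_u)$. Commutativity of the square containing $h_{k+1}$ and $f_k$ gives $w = \delta_k^{v}(h_{k+1}(d)) = f_k(\delta_k^{u}(d))$, so $w$ really lives in $\im f_k = H^{u,v}_{k}(A)$ (so that the target of $\delta$ is correct). Then $\alpha(w) = \alpha_k^{v}(w) = \alpha_k^{v}(\delta_k^{v}(c)) = 0$ by exactness of the bottom row at $H_k(A_v)$. Hence $\im \delta \subseteq \ker \alpha$.

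The proof of $(ii)$ and $(iii)$ proceeds identically. For $(ii)$, write $y = \alpha(x)$ with $x = f_k(a)$; commutativity rewrites $y = \alpha_k^{v}(f_k(a)) = g_k(\alpha_k^{u}(a)) \in \im g_k = H^{u,v}_{k}(X)$, and then $\beta(y) = \beta_k^{v}(\alpha_k^{v}(x)) = 0$ by exactness at $H_k(X_v)$. For $(iii)$, write $z = \beta(y)$ with $y = g_k(b)$; commutativity rewrites $z = \beta_k^{v}(g_k(b)) = h_k(\beta_k^{u}(b)) \in \im h_k = H^{u,v}_{k}(X,A)$, and then $\delta(z) = \delta_k^{v}(\beta_k^{v}(y)) = 0$ by exactness at $H_k(X_v, A_v)$.

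The main obstacle is mostly bookkeeping rather than mathematical depth: one has to be careful that each restricted map $\delta, \alpha, \beta$ is well-defined as a map \emph{into} the correct persistent homology subgroup of the codomain, not just into the homology of the $v$-sublevel set. This well-definedness is precisely what the commutativity halves of the three arguments above supply, and without them the statement ``$\im\delta \subseteq \ker\alpha$'' would not even be typechecked. Once this is verified, the vanishing of the compositions is immediate from the exactness of the relative long exact sequence at level $v$, so no deeper input is needed.
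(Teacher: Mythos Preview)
Your proof is correct and follows essentially the same approach as the paper: first use the commutativity of the squares in Lemma~\ref{comd} to ensure each restricted map lands in the appropriate $\im f_k$, $\im g_k$, or $\im h_k$, and then use exactness of the bottom row at level $v$ to kill the composition. The paper only writes out part~(i) and records the well-definedness observation $\im\delta\subset\im f_k$, $\im\alpha\subset\im g_k$, $\im\beta\subset\im h_k$ as a one-line preliminary; your version makes the commutativity computation $w=f_k(\delta_k^u(d))$ more explicit, but the argument is the same.
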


\begin{proof}
First observe that, by Lemma~\ref{comd}, $\im \delta\subset \im f_k$, $\im \alpha\subset \im g_k$ and $\im \beta\subset \im h_k$.
We prove only claim $(i)$.  The claims $(ii)$ and $(iii)$ cam be deduced similarly.

Let $\sigma \in \im \delta$.  Then there exists $\gamma \in H_{k+1}(X_u, A_u)$ such that $\delta(\gamma)=\delta_k^{v}(h_{k+1}(\gamma))=\sigma$.
Thus we get $\sigma \in \im \delta_k^{v}=\ker \alpha_k^{v}$.
Since $\sigma \in \im f_k$, there exists $\tau \in H_k(a_u)$ such that $f_k(\tau)=\sigma.$
Then 
$$
0=\alpha_k^{v}(\sigma)=\alpha_k^{v}(f_{k}(\tau)=\alpha_k^{v}|_{\im f_{k}}(\sigma)=\alpha(\sigma).
$$
Therefore, $\sigma \in \ker\alpha$
\end{proof}

The above proposition shows that the sequence of homomorphisms of persistence homology group is exact of order two.  
Now we give an example showing that this sequence is not exact.

\begin{example}
Let $X$ be the orientable closed surface of genus $2$ and $A$ be a separating circle on $X$ as in Figure \ref{fig:relativePH}.  
Let $X_u$, $X_v$ and $A_u$, $A_v$ be the sublevel sets obtained by the height function $\varphi:X\rightarrow \bR$ for $u, v \in \bR$ such that $u<v$.
We have, $H_*(X, A)\cong H_*(X/A)$ and $H_*(X_u, A_u)\cong H_*(X_u/A_u)$ which in turn imply 
\begin{eqnarray*}
H_2(X_u)&=&0=H_2(X_u, A_u)=H_2(A_u)=H_2(A_v),\\ H_1(X_u)&\cong&\langle a, b, c, d, e \rangle,\\ H_1(A_u)&\cong&\langle A \rangle,\\ H_1(X_u,A_u)&\cong&\langle a,b,c,d \rangle\\
H_1(X_v)&\cong&\langle a, b, c, d \rangle=H_1(X_v, A_v),\\ H_1(A_v)&\cong&\langle A \rangle.
\end{eqnarray*}
Note that, although the homology class $\langle A \rangle$ in $H_1(A_u)$ lives in $H_1(X_u)$ as $a+e$, it is null-homologous in $H_1(X_v)$.

That is 
\begin{enumerate}
\item $\im h_2 = H^{u,v}_{2}(X,A)\cong 0$
\item $\im f_1 = H^{u,v}_{1}(A)\cong \langle A \rangle$
\item $\im g_1 = H^{u,v}_{1}(X)\cong\langle a,b,c,d \rangle$
\item $\im h_1 = H^{u,v}_{1}(X,A)\cong \langle a,b,c,d \rangle$
\end{enumerate}

Therefore, the sequence of homomorphisms of the corresponding persistence homology groups  
$$
\displaystyle
\xymatrix @-1pc 
{ &{0}\ar[r] & H^{u, v}_{2}(X, A)\ar[r]^-{\delta} & H^{u, v}_{1}(A) \ar[r]^-{\alpha} & 
H^{u, v}_{1}(X) \ar[r]^-{\beta}& H^{u, v}_{1}(X, A) \ar[r] & {0}& }
$$

is obtained as in the following :

$$
\displaystyle
\xymatrix @-1pc 
{ &{0}\ar[r] & {0}\ar[r]^-{\delta} & \langle A \rangle \ar[r]^-{\alpha} & \langle a, b, c, d \rangle \ar[r]^-{\beta}& \langle a, b, c, d \rangle \ar[r] & {\cdots}& }
$$

By the definition of the homomorphisms, $\alpha([A])=0$. Then $[A]\in \ker \alpha$ but $\im \delta=0$.  

Thus $\ker\alpha\nsubseteq \im\delta$ and the long sequence for the persistence homology groups of relative pair is not exact.
\begin{figure}[hbtp]
			\centering
			\hspace{1.5 cm}
			\includegraphics[width=.9\textwidth,height=.8\textheight]{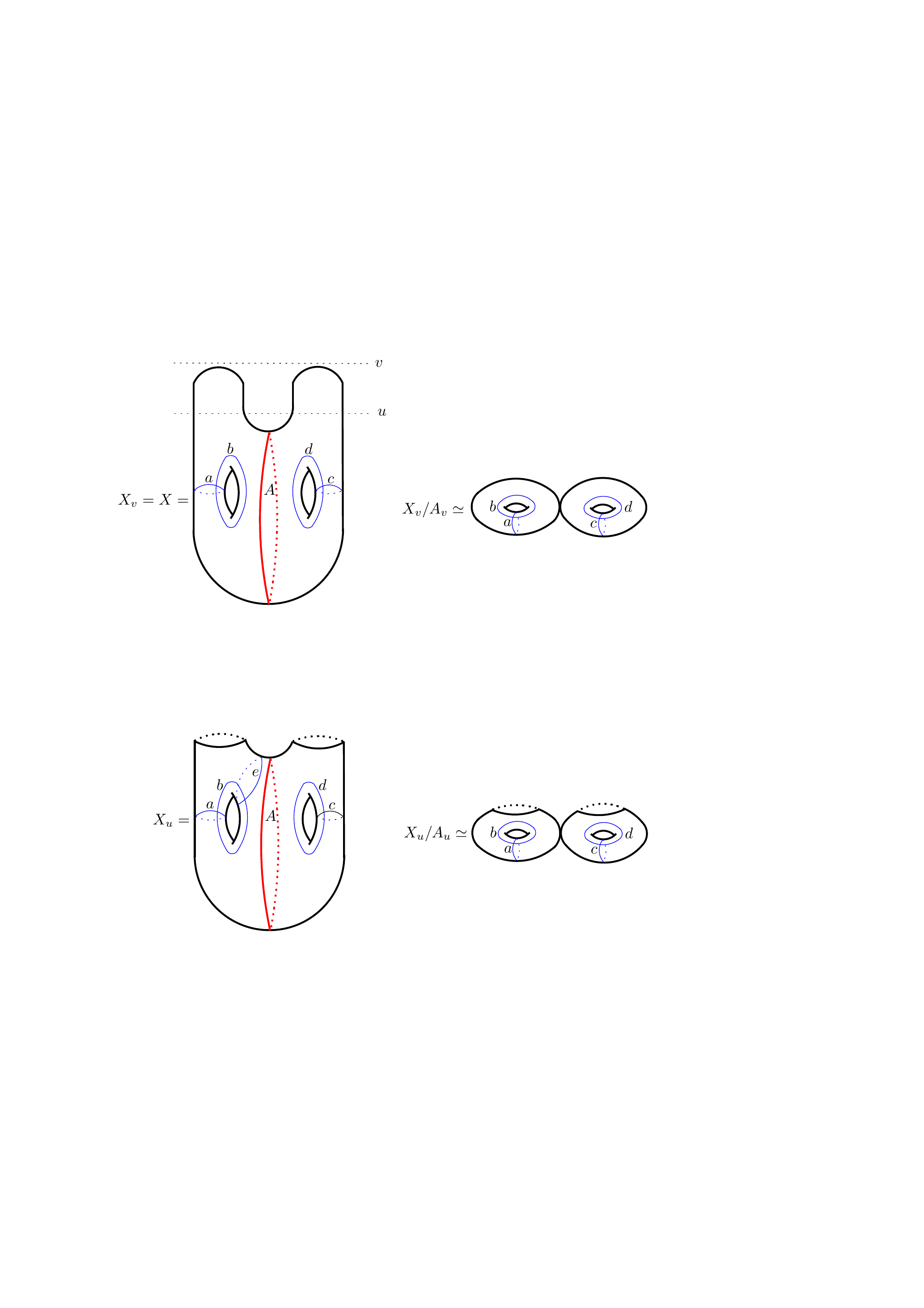}
			\caption{}
			\label{fig:relativePH}
\end{figure}

\end{example}

Next, we prove that the long sequence for graded persistence modules of a pair $(X, A)$ is exact.  Let 
$$
\mathcal{H}_{k}(X, A)=\bigoplus^{n} _{u=0 } H^{u}_{k}(X, A), \  \mathcal{H}_{k}(X)=\bigoplus^{n} _{u=0 } H^{u}_{k}(X) \  \textrm{and} \  
\mathcal{H}_{k}(A)=\bigoplus^{n} _{u=0 } H^{u}_{k}(A)
$$
be the graded persistence modules of $(X,A)$, $X$ and $A$, respectively.
\begin{theorem}
The long sequence of graded persistence modules of a relative pair $(X, A)$  
$$
\displaystyle
\xymatrix @-1pc 
{ &{\cdots}\ar[r] & \mathcal{H}_{k+1}(X,A)\ar[r]^-{\delta} & \mathcal{H}_{k}(A) \ar[r]^-{\alpha} & \mathcal{H}_{k}(X)\ar[r]^-{\beta}& \mathcal{H}_{k}(X, A) \ar[r] & {\cdots}& }
$$
is exact, where 
\begin{align}
\delta(m^0, m^1, \ldots, m^n)&=(\delta_k^{0}(m^0), \delta_k^{1}(m^1), \ldots, \delta_k^{n}(m^n)), \nonumber
\\
 \alpha(y^0, y^1, \ldots, y^n)&=(\alpha_k^{0}(y^0), \alpha_k^{1}(y^1), \ldots, \alpha_k^{n}(y^n)),\nonumber
\\
 \beta(x^0, x^1, \ldots, x^n)&=(\beta_k^{0}(x^0), \beta_k^{1}(x^1), \ldots, \beta_k^{n}(x^n)).\nonumber
 \end{align}

Equivalently we have
\begin{enumerate}
\item $ \im \delta = \ker \alpha $,
\item $ \im \alpha = \ker \beta$,
\item $ \im \beta = \ker \delta$.
\end{enumerate}

\end{theorem}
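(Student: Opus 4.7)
The plan is to imitate, step for step, the argument already carried out for Theorem~\ref{Mayermodule}. The underlying reason nothing new happens is that the maps $\delta,\alpha,\beta$ on the graded persistence modules are defined componentwise, and at each fixed filtration level $u$ the ordinary long exact sequence of the pair $(X_u,A_u)$ is exact by Lemma~\ref{comd}. So exactness of the sequence of graded modules will follow from exactness at each level, together with the observation that there is no compatibility obstruction when assembling a tuple of preimages.

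I will prove the three claims separately and in full detail only for $(i)$, noting that $(ii)$ and $(iii)$ are formally identical. For $(i)$, I first check $\im\delta\subseteq\ker\alpha$: an arbitrary element of $\im\delta$ has the form $(\delta_k^0(m^0),\ldots,\delta_k^n(m^n))$, and Lemma~\ref{comd} gives $\alpha_k^u\circ\delta_k^u=0$ for every $u$, so applying $\alpha$ componentwise yields zero. For the reverse inclusion, I take $(c^0,\ldots,c^n)\in\ker\alpha$. Since $\alpha$ is componentwise, $\alpha_k^u(c^u)=0$ in $H^u_k(X)$ for each $u$, so by exactness of the long exact sequence of $(X_u,A_u)$ there exists $m^u\in H^u_{k+1}(X,A)$ with $\delta_k^u(m^u)=c^u$. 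Assembling $(m^0,\ldots,m^n)\in\mathcal{H}_{k+1}(X,A)$ gives a preimage of $(c^0,\ldots,c^n)$ under $\delta$.

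For $(ii)$ and $(iii)$ the same template applies: the forward inclusion is just componentwise vanishing of two consecutive maps in the pair sequence, and the reverse inclusion is a componentwise preimage extraction using the exactness of each row in Lemma~\ref{comd}, followed by assembly into a tuple.

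The only point that deserves a sanity check, rather than a serious obstacle, is that $\delta,\alpha,\beta$ are genuine $\bF[x]$-module homomorphisms between the graded persistence modules, i.e.\ they commute with the shift action of $x$. This follows immediately from the commutativity of the vertical squares in Lemma~\ref{comd}: the square involving $\delta_k^u$ and $\delta_k^{u+1}$ together with the inclusion-induced maps $h_{k+1}$ and $f_k$ says precisely that applying $\delta$ and then shifting agrees with shifting and then applying $\delta$, and similarly for $\alpha$ and $\beta$. Once this is noted, no compatibility condition constrains the choice of componentwise preimages, and the assembly step in the previous paragraph is legitimate. I do not expect any genuine difficulty; the content of the theorem lies in recognizing that passing from $H^{u,v}_k$ to $\mathcal{H}_k$ repairs the failure of exactness exhibited in the preceding example in exactly the same way the Mayer-Vietoris case was repaired.
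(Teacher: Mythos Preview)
Your proposal is correct and follows essentially the same approach as the paper's own proof: both argue componentwise, invoking Lemma~\ref{comd} at each filtration level $u$ to get $\im\delta_k^u=\ker\alpha_k^u$, and then assemble the resulting preimages into a tuple in $\mathcal{H}_{k+1}(X,A)$. Your additional remark that $\delta,\alpha,\beta$ are $\bF[x]$-module maps (via the commutative squares in Lemma~\ref{comd}) is a worthwhile observation that the paper leaves implicit, but it does not alter the structure of the argument.
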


\begin{proof}
We prove only claim $(i)$.  Claims $(ii)$ and $(iii)$ can be obtained analogously.  

Let $(y^0, y^1, \ldots, y^n)\in \im \delta$.  Then there exists $ (m^0, m^1, \ldots, m^n)\in \mathcal{H}_{k+1}(X)$ such that 
$$
\delta(m^0, m^1, \ldots, m^n)=(\delta_k^{0}(m^0), \delta_k^{1}(m^1), \ldots, \delta_k^{n}(m^n))=(y^0, y^1, \ldots, y^n).
$$  
So $\delta_k^{u}(m^u)=y^u$ and  $y^u\in\im \delta_k^{u}=\ker\alpha_k^{u}$ for all $u$ by Lemma \ref{comd}.  
Then we get $\alpha_k^{u}(y^u)=0$.  Therefore, 
\begin{align}
\alpha(y^0, y^1, \ldots, y^n)&=(\alpha_k^{0}(y^0), \alpha_k^{1}(y^1), \ldots, \alpha_k^{n}(y^n))\nonumber\\
&=(0,0, \ldots, 0) \nonumber
\end{align}
and $(y^0, y^1, \ldots, y^n)\in \ker \alpha$.  Thus, $ \im \delta \subset \ker \alpha $.

Let $(c^0,  c^1, \ldots, c^n)\in \ker \alpha$.
Then 
\begin{align}
\alpha(c^0, c^1,  \ldots, c^n)&=(\alpha_k^{0}(c^0), \alpha_k^{1}(c^1), \ldots, \alpha_k^{n}(c^n)) \nonumber\\
 &=(0, 0, \ldots, 0)\nonumber.
\end{align}

We get $\alpha_k^{u}(c^u)=0$ for all $c^u\in H^{u}_{k}(A)$.  Thus, $c^u\in \ker \alpha_k^{u}=\im\delta_k^{u}$ by Lemma \ref{comd}.  
Then there exists $m^u\in H^{u}_{k+1}(X, A)$ such that $\delta_k^{u}(m^u)=c^u$ for all $u\geq 0$ and then we get 
$$
(c^0, c^1,  \ldots, c^n)=(\delta_k^{0}(m^0), \delta_k^{1}(m^1), \ldots, \delta_k^{n}(m^n))=\delta(m^0, m^1, \ldots, m^n).
$$ 
Therefore, $(c^0, c^1, \ldots, c^n)\in \im \delta$ and this implies that $ \ker \alpha \subset \im \delta$. 
\end{proof}
	

\bibliographystyle{amsplain}

\begin{thebibliography}{10}

\bibitem{zomorodian2005computing} 
A. Zomorodian, and G. Carlsson, \emph{Computing persistent homology}, Discrete and Computational
Geometry \textbf{33} (2005), pp. 249--274.

\bibitem{di2011mayer} 
B. Di Fabio and C. Landi, \emph{A Mayer--Vietoris formula for persistent homology with an application to shape recognition in the presence of occlusions}, 
Foundations of Computational Mathematics, \textbf {11(5)} 2011, 499--527.

\bibitem{edelsbrunner2010computational}
H. Edelsbrunner and J. Harer, \emph{Computational topology: an introduction}, American Mathematical Soc., (2010).

\bibitem{forman1}
R.~Forman, \emph{Morse theory for cell complexes}, 
Adv. Math. \textbf{134} (1998), 90--145.

\bibitem{HJ}
H. Edelsbrunner and J. Harer, \emph{Persistent Homology-A Survey}, Contemporary Mathematics, \textbf{453}, (2008), 257--280.

\bibitem{VDM}
V. de Silva, D. Morozov and M. Vejdemo-Johansson, \emph{Dualities in persistent (co)homology}, Inverse Problems, \textbf{27(12)}, (2011), 124003, 17 pp.

\end{thebibliography}
\providecommand{\bysame}{\leavevmode\hbox
to3em{\hrulefill}\thinspace}
\providecommand{\MR}{\relax\ifhmode\unskip\space\fi MR }
 \MRhref  \MR
\providecommand{\MRhref}[2]{
  \href{http://www.ams.org/mathscinet-getitem?mr=#1}{#2}
 } \providecommand{\href}[2]{#2}

\end{document}